\documentclass[a4paper,11pt,french,openany]{article}
\usepackage[latin1]{inputenc}
\usepackage[T1]{fontenc}
\usepackage[francais]{babel}
\usepackage{amsfonts}
\usepackage{amscd}
\usepackage{graphicx}
\usepackage{babel}
\usepackage{fancyhdr}
\usepackage[a4paper,left=2cm,right=2cm,top=2cm,bottom=1.5cm]{geometry}
\usepackage[french]{minitoc}
\usepackage{tabularx}
\usepackage{amssymb}
\usepackage{amsthm}
\usepackage{multirow}
\usepackage{multicol}
\usepackage{array}
\usepackage[all]{xy}
\usepackage[centertags]{amsmath}
\usepackage{latexsym}
\usepackage{amsfonts}
\usepackage{array}
\newtheorem{theorem}{\textbf{Theorem}}[section]

\theoremstyle{plain}

\theoremstyle{plain}

\theoremstyle{plain}

\theoremstyle{plain}
\newtheorem{remarq}{\textbf{Remark}}[section]

\theoremstyle{plain}

\theoremstyle{plain}
\newtheorem{defn}{\textbf{Definition}}[section]
\theoremstyle{plain}

\theoremstyle{plain}

\theoremstyle{plain}

\theoremstyle{plain}

\theoremstyle{plain}

\theoremstyle{plain}

\begin{document}
\begin{center}
\textbf{Semi-global solutions to the Goursat problem for second-order hyper-quasilinear hyperbolic systems with lineary dependent principal coefficients
 and  applications to the vacuum  Einstein equations}
\end{center}
\begin{center}
 \textbf{  LOUOKDOM TAMTO Paul Giscard$^{1}$,  HOUPA
 DANGA Duplex Elvis$^{2}$  and KOUAKEP TCHAPTCHIE Yannick$^{3}$}
 \end{center}
 \begin{center}
 \begin{small}
$^{1}$Department of Mathematics, Faculty of Sciences, The University of
Ngaoundere, Dang, Ngaoundere, 454, ADAMAOUA, CAMEROON.\\
$^{2}$Department of Mathematics, Faculty of Sciences, The University of
Ngaoundere, Dang, Ngaoundere, 454, ADAMAOUA, CAMEROON.\\
$^{3}$Department of SFTI-EGCIM, The University of
Ngaoundere, Dang, Ngaoundere, 454, ADAMAOUA, CAMEROON.

\end{small}
\end{center}

\begin{center}
*Corresponding author(s). E-mail(s) : kouakep@aims-senegal.org ; \\
Contributing authors: elvishoupa@gmail.com; louokdompaulgiscard@gmail.com.
\end{center}
\textbf{Abstract} \par In this work, we significantly extend the results of  D.
Houpa, 2006 on the  Goursat problem for  second-order semi-linear hyperbolic  systems to the broader framwork of second-order
 hyper-quasilinear hyperbolic systems
of Goursat type, in which   the coefficients of the second-order derivatives
 depend linearly on the unknown.
  By adapting techniques inspired by   Y. Foures (Choquet)- Bruhat,
 Acta Mathematica, 1952. we show that in the Sobolev type spaces for the Goursat problem quasilinear hyperbolic of the second order considered, the solution exists and is defined
 in the vicinity of the meeting characteristic hypersurfaces which carry the initial data. As an application, in harmonic gauge, we derive a semi-global existence and uniqueness result for the
   vacuum Einstein equations.
 \begin{center}
\textbf{ Keywords: semi-global solution;  spatio-characteristic problem; Goursat problem; local solution.}
\par
 \textbf{ MSC Classification: 35L05, 35P25,  35Q75. }
\end{center}
 \section{Introduction}
For many years, the search for exact solutions of certain equations in mathematical physics (the vacuum Einstein equations, the Yang-Mills-Higgs equations, and the Maxwell equations) has represented a major challenge for physicists, due to their strong degree of nonlinearity.
To assist in this task, mathematicians have been led to impose additional conditions, known as gauge conditions (harmonic or temporal gauges for the Einstein equations, Lorenz gauge for the Yang-Mills-Higgs and Maxwell equations), which make it possible, in the limit, to establish results on local existence, semi-global existence \cite{2,4}, and even global existence \cite{ig}.

\par Motivated by these works, with the aim of solving the Goursat problem associated with the vacuum Einstein equations in harmonic gauge semi-globally in weighted Sobolev-type spaces, we first establish an existence and uniqueness result for semi-global solutions to hyper-quasilinear hyperbolic second-order Goursat problems in which the coefficients of the second-order derivatives depend linearly on the unknown function. This is done by closely following a method initiated by M. Dossa and S. Bah \cite{2} for the semi-global resolution of semilinear hyperbolic second-order Cauchy problems with initial data posed on a characteristic conoid, and further developed by D. Houpa and M. Dossa \cite{4} for the semi-global resolution of semilinear hyperbolic second-order Goursat problems. This approach relies on a careful combination of an existence and uniqueness result for semi-global solutions of the integral system arising from the Kirchhoff formulas associated with the problem; and a local existence and uniqueness result for the original problem, obtained via the local resolution of an associated mixed spacelike-characteristic second-order hyperbolic problem. \par The rest of this work is organized as follows. In Section 2, we formulate the problem, introduce the associated assumptions, and define a suitable geometric framework together with an appropriate functional setting for its analysis. In Section 3, using methods similar to those of Y. Foures (Choquet)-Bruhat \cite{cho}, we establish an existence and uniqueness result for semi-global solutions of the generalized Kirchhoff integral system associated with hyper-quasilinear second-order hyperbolic Goursat problems in which the coefficients of the second-order derivatives depend linearly on the unknown function $(\ref{8f})$. The problem is then solved by adopting a semilinearization approach as suggested by D. Houpa in his thesis prospects. This semilinearization method is based on a fixed-point argument, whose main ingredient is the existence and uniqueness result for solutions of the generalized Kirchhoff integral system associated with second-order semilinear Goursat problems established by D. Houpa and M. Dossa \cite{4}. In the same section, by using a local existence and uniqueness result for hyper-quasilinear second-order hyperbolic mixed spacelike-characteristic problems \cite{5}, we show that, within a Sobolev-type functional framework and under finite differentiability assumptions on the data without any smallness assumptions on the initial data and under fairly general structural assumptions on the nonlinear terms the solution of the considered problem is defined in a neighbourhood of the entire family of characteristic hypersurfaces carrying the initial data in Y. In Section 4, under the harmonic gauge condition, we apply this existence and uniqueness result for semi-global solutions to the vacuum Einstein equations. Section 5 is devoted to a discussion.  Finally, section 6 is devoted to declaration of competing interests.
\section{Geometric framework, assumptions and formulation of the problem}
We consider the following hyper-quasilinear system of equations:\\
$(G_{r}):~A^{\lambda\mu }(x^{\alpha},u_{s})\partial^{2}_{\lambda\mu}u_{r}+f_{r}(x^{\alpha},u_{s},\partial_{\nu}u_{s})=0 ~~~in ~~~ \mathcal{Y}$,
 where $\lambda,\alpha,\mu,\nu=0,...,3$; $r,s=1,...,n$;  $w=1,2$;
 $\partial^{2}_{\lambda\mu}=\frac{\partial^{2}}{\partial x^{\lambda}\partial x^{\mu}}$;  $\partial_{\nu}=\frac{\partial}{\partial x^{\nu}}$.\\
$\bullet$ $\mathcal{Y}=\lbrace (x^{\alpha}) \in \mathbb{R}^{4}/ x^{0}\geq \mid x^{1} \mid ; (x^{2},x^{3}) \in B\rbrace$, $S^{w}=\lbrace (x^{\alpha}) \in \mathbb{R}^{4}/ x^{0}=(-1)^{w-1} x^{1}; (x^{2},x^{3}) \in B\rbrace$, where $B$ is a closed bounded domain of $\mathbb{R}^{2}$;\\
$\bullet$ For any function $v$ defined in a domain of $\mathcal{Y}$, we set $[v]^{w}=v\mid_{S^{w}}$

 \begin{center}
                                                                                                        $[v]^{w}(x^{1},x^{2},x^{3})=v((-1)^{w-1}x^{1},x^{1},x^{2},x^{3})$, $(w=1,2)$
                                                                                                            \end{center}
$\bullet$ $\Gamma$ is a 2-surface of $\mathbb{R}^{4}$ defined by: $\lbrace (x^{\alpha}) \in \mathbb{R}^{4}: x^{0}=0= x^{1} ; (x^{2},x^{3}) \in B\rbrace$; $S=S^{1}\cup S^{2}$;  $\Gamma=S^{1}\cap S^{2}$; $D^{w}=\lbrace (x^{i}) \in \mathbb{R}^{3}: (x^{0},x^{i}) \in S^{w}\rbrace$, $\forall$ $T \in \mathbb{R}^{*}_{+}$, $\mathcal{Y}_{T}= \mathcal{Y}\cap \lbrace x^{0}\leq T\rbrace$,\\ $S^{w}_{T}=S^{w}\cap\lbrace x^{0}\leq T\rbrace$, $\sum^{w}_{T}=S^{w}\cap \lbrace x^{0}= T\rbrace$;\\
$\bullet$ $\forall$ $\sigma \in \mathbb{R}^{*}_{+}$, $P_{(\sigma)}$ is the hyperbolic of cylindrical equation: $ (x^{0})^{2}- (x^{1})^{2}=\sigma^{2}$,\\ $\mathcal{Y}_{(\sigma)}=\lbrace \in \mathbb{R}^{4}: \mid x^{1}\mid \leq x^{0} \leq \sqrt{(x^{1})^{2}+\sigma^{2}}\rbrace$, $G_{T,\sigma}=\mathcal{Y}_{(\sigma)}\cap \lbrace x^{0}= T\rbrace$,$\mathcal{Y}_{T,\sigma}=\mathcal{Y}_{T}\cap \mathcal{Y}_{(\sigma)}$;\\
$\bullet$ $\mathcal{Y}^{(f)}=\cup_{T \in \mathbb{R}^{*}_{+}}\mathcal{Y}_{T,f(T)}$ where $f$: $\mathbb{R}^{*}_{+}$ $\rightarrow$ $\mathbb{R}^{*}_{+}$ is an application;\\
$\bullet$ $\varphi_{r}^{w}$ is a function defined on $D^{w}$;\\ We consider the hyper-quasilinear system second order hyperbolic with initial data following
Goursat :

\begin{equation}\label{7}
\left\{
                \begin{array}{ll}
(G_{r}):~A^{\lambda\mu }(x^{\alpha},u_{s})\partial^{2}_{\lambda\mu}u_{r}+f_{r}(x^{\alpha},u_{s},\partial_{\nu}u_{s})=0 ~~~in ~~~ \mathcal{Y,} \hbox{} \\
~~~~~~~~~~~~~ u_{r}=\bar{u}_{r}=\varphi^{w}_{r} ~~on ~~S^{w};
                \end{array}
              \right.
\end{equation} In an appropriate functional framework involving Sobolev-type spaces to be specified, and under fairly general structural assumptions
   $(\mathcal{G}_{0})$ and  $(\mathcal{G}_{1})$ on the nonlinear terms, one shows that the solution to $(\ref{7})$ exists and is defined not only in a sufficiently small neighborhood of the secant  $\Gamma$, but also in a  neighborhood of the entire set $S$ in $\mathcal{Y}$.\\  We make the following hypotheses: let $p$ $\in$ $\mathbb{N}\cup \lbrace+\infty \rbrace$.\\
$\bullet$ $(\alpha_{p})$:
$\rightarrow$ The functions $ A^{\lambda \mu}$ are defined and of class $\mathcal{C}^{2p+1}$ in a domain $\mathbb{R}^{2}\times B \times W $ where $B$ is a bounded closed of $\mathbb{R}^{2}$ and $W$ is an open of $\mathbb{R}^{n}$;\\
$\rightarrow$ In $\mathbb{R}^{2}\times B \times W $, $A^{\lambda \mu}(x^{\alpha},u_{s}(x^{\alpha}))$ defines a quadratic form
 signature +~-~-~-, $A^{00}>0$,
 $A^{ij}X_{i}X_{j}$ is negative definite $(i,j = 1,2,3)$;\\
$\rightarrow$ There exists $(a_{r})$ $\in$ $W$ such that $A^{\lambda\mu}(x^{\alpha}_{0},a_{r})=\eta^{\lambda\mu}$ (Minkowski metric).\\
$\bullet$ $(\beta_{p})$: The $f_{r}(x^{\alpha},u_{s},u_{s \nu})$ are functions of class $\mathcal{C}^{2p-1}$ in a domain $\mathbb{R}^{2}\times B \times W \times Z$ where $Z$ is an open $\mathbb{R}^{4n}$.\\$\bullet$ $(\gamma_{p})$: The $\bar{u}_{s}$ are maps of $\mathbb{R}^{2}\times B $ in $W$ of class $C^{p+2}$ on each of the hypersurfaces $S^{w}$, continuous on the secant $\Gamma$ and such that $\bar{u}_{s}(x^{\alpha}_{0})=a_{s}$, the point $a_{s\lambda}=(D_{\lambda}\bar{u}_{s}(x^{\alpha}_{0}))$ $\in$ $Z$.\\
 $\bullet$ $\mathcal{(C)}$: The $S^{w} (w=1,2)$ are two characteristic hypersurfaces for the differential operator $\bar{L}=\bar{A}^{\lambda \mu}\partial^{2}_{\lambda \mu}$, with $\bar{A}^{\lambda \mu}(x^{\alpha})=A^{\lambda \mu}(x^{\alpha},\bar{u}_{s}(x^{\alpha}))$, and secants along the 2-surface $\Gamma$~ie:
$(x^{i})$ $\in$ $D^{w}$
\begin{equation}\label{tsv}
A^{00}((-1)^{w-1}x^{1},x^{i},\varphi^{w}_{s}(x^{i}))+2(-1)^{w}A^{01}((-1)^{w-1}x^{1},x^{i},
\varphi^{w}_{s}(x^{i}))+A^{11}((-1)^{w-1}x^{1},x^{i},\varphi^{w}_{s}(x^{i}))=0,
\end{equation}
$\bullet$ $(\lambda_{p})$: $\forall$ $r$, $\forall$ $w$,  $\varphi^{w}_{r} \in C^{2p+1}(D^{w})$, continuous on  $S^{w}$, $\varphi_{r}^{1}=\varphi_{r}^{2}$ on $\Gamma$ and $\varphi^{w}_{r}(x^{\alpha}_{0})=a_{r}$.\\
$\bullet$ \textbf{Geometric hypothesis} $(\mathcal{G})$: For all
$\sigma$ $\in$ $\mathbb{R}^{*}_{+}$, $\mathcal{P}_{(\sigma)}$ is
space for
$\bar{L}$.\\
$\bullet$ \textbf{Structure hypothesis} $(\mathcal{G}_{0})$:
 The $A^{\lambda\mu}$ are class functions $\mathcal{C}^{\infty}$ in $\mathbb{R}^{2}\times B\times W$ linear with respect to the $u_{s}$.\\
$\bullet$ \textbf{Structure hypothesis $(\mathcal{G}_{1})$}:
 The $f_{r}$ are functions of class $\mathcal{C}^{\infty}$ in $\mathbb{R}^{2}\times B \times W\times Z$
 and such that for all $r$ $\in$ $[1,n]\bigcap\mathbb{N}$, for all $w$ $\in$ $\{1;2\}$,
  $f_{r}(x^{\alpha},[u_{s}]^{w},[\partial_{\nu}u_{s}]^{w})$ is linear with respect to $[\partial_{0}u_{s}]^{w}$ when we replace
  $(x^{\alpha})$ by $((-1)^{w-1}x^{1},x^{1},x^{2},x^{3})$ and the $[\partial_{i}u_{s}]^{w}$ by their value
  $\partial_{i}[u_{s}]^{w}-(-1)^{w-1}\delta_{i1}[\partial_{0}u_{s}]^{w}$ , for
  $i=1;2;3$.\\
$\bullet$ \textbf{Structure hypothesis $(\mathcal{G}_{2})$}:
 The $f_{r}$ are functions of class $\mathcal{C}^{\infty}$
 in $\mathbb{R}^{2}\times B \times W\times Z$ linear with respect
 aux $\partial_{\nu} u_{s}$, where $Z$ is an open of
 $\mathbb{R}^{4n}$.

\begin{remarq}The geometric hypothesis $(\mathcal{G})$ allows the interior of the domain $\mathcal{Y}$ to be layered by spatial hypersurfaces so as to give rise to a spatio-characteristic hyperbolic hyper-quasilinear problem of the second order.
\end{remarq}
  \begin{remarq}
  (i) The structure hypothesis $(\mathcal{G}_{2})$ is a
  sufficient condition for the structural hypothesis
  $(\mathcal{G}_{1})$ is verified.\\
  (2i) The structure hypothesis $(\mathcal{G}_{1})$ leads to the restriction to $S^{w}$ of the equations $(G_{r})$ being reduced
  to the first order linear partial differential equations of unknowns $[\partial_{0}u_{s}]^{w}$ which can be solved globally in $S^{w}$.
 \\ (3i) If the functions $f_{r}$ are of the form $L_{r}+Q_{r}$ with the $L_{r}$ verifying the hypothesis $(\mathcal{G}_{2})$ and the $Q_{r}$
  being quadratic class functions $C^{\infty}$ in $\partial_{s}u_{s}$ and satisfying the Klainerman nullity condition
    \cite{lr} then the $f_{r}$ verify the structure hypothesis
  $(\mathcal{G}_{1})$
 \end{remarq}
Note: for $\alpha=(\alpha_{i})$ $\in$ $\mathbb{N}^{3}$, $\partial^{\alpha}=\frac{\partial^{\mid\alpha\mid}}{(\partial x^{1})^{\alpha_{1}} (\partial x^{2})^{\alpha_{2}}(\partial x^{3})^{\alpha_{3}}}$; for $\alpha=(\alpha_{\nu})$ $\in$ $\mathbb{N}^{4}$, $D^{\alpha}=\frac{\partial^{\mid\alpha\mid}}{(\partial x^{0})^{\alpha_{0}}(\partial x^{1})^{\alpha_{1}} (\partial x^{3})^{\alpha_{3}}}$ and for $k$ $\in$ $\mathbb{N}$, $\partial^{k}_{0}=\frac{\partial^{k}}{(\partial x^{0})^{k} }$.\\$\bullet$ For any vector function $v=(v_{r})$ defined in a domain of $S^{w}$ and for all $p$ $\in$ $\mathbb{N}$ we set:\\
$\parallel v\parallel_{ H^{p}(\sum_{t}^{w},S^{w})}=(\sum\limits_{r}\sum\limits_{\mid \alpha\mid \leq p} \int_{\sum_{t}^{w}} \mid \partial^{\alpha} v_{r} \mid ^{2} d\sigma (\sum_{t}^{w}) )^{\frac{1}{2}}$, where
$d\sigma (\sum_{t}^{w})$ is the measure induced on $\sum_{t}^{w}$ by $dx'=dx^{1} dx^{2} dx^{3}$;\\
$\parallel v\parallel_{ H^{p}(S^{w}_{t})}=(\sum\limits_{r}\sum\limits_{\mid \alpha\mid \leq p} \int_{S_{t}^{w}} \mid \partial^{\alpha} v_{r} \mid ^{2} dx' )^{\frac{1}{2}}$ ; \\
$\parallel v\parallel_{ E^{p}(S^{w}_{t})}=ess\sup \limits_{ \tau \in ]0, t]}\parallel v\parallel_{ H^{p}(\sum_{\tau}^{w},S^{w})} $;\\
if the second members exist and the derivatives being taken in the sense of distributions.\\
$\bullet$ For any vector function $v=(v_{r})$ defined on $\mathcal{Y}_{t}$, we set: $\forall$ $p$ $\in$ $\mathbb{N}$,\\
$\parallel v\parallel_{ H^{p}(G_{\tau},\mathcal{Y})}=(\sum\limits_{r}\sum\limits_{\mid \alpha\mid \leq p} \int_{G_{\tau}} \mid  D^{\alpha} v_{r} \mid ^{2} dx' )^{\frac{1}{2}}$;\\
$\parallel v\parallel_{ K^{p}(\mathcal{Y}_{t})}=(\int_{0}^{t}\tau^{-1} \parallel v\parallel^{2}_{ H^{p}(G_{\tau},\mathcal{Y})} d\tau  )^{\frac{1}{2}}$ ;
$\parallel v\parallel_{ E^{p}(\mathcal{Y}_{t})}=ess\sup \limits_{ \tau \in ]0, t]} \tau^{-\frac{1}{2}} \parallel v\parallel_{ H^{p}(G_{\tau},\mathcal{Y})} $;\\
$\parallel v\parallel_{ \mathcal{K}^{p}(\Gamma,S^{w},\mathcal{Y})}=(\sum\limits_{k=0}^{p-1}  \parallel [\partial^{k}_{0}v] \parallel^{2}_{ H^{2(p-k)-1}(\Gamma,S^{w})}   )^{\frac{1}{2}}$; $\parallel v\parallel_{ \mathcal{K}^{p}_{1}(\Gamma,S^{w},\mathcal{Y})}=(\sum\limits_{k=1}^{p-1}  \parallel [\partial^{k}_{0}v] \parallel^{2}_{ H^{2(p-k)-1}(\Gamma,S^{w})}   )^{\frac{1}{2}}$;
\\
$\parallel v\parallel_{ \mathcal{K}^{p}_{1}(S_{t}^{w},\mathcal{Y})}=(\sum\limits_{k=1}^{p-1}  \parallel [\partial^{k}_{0}v] \parallel^{2}_{ H^{2(p-k)-1}(S_{t}^{w})}   )^{\frac{1}{2}}$ ; $\parallel v\parallel_{ \mathcal{E}^{p}_{1}(S_{t}^{w},\mathcal{Y})}=(\sum\limits_{k=1}^{p-1}  \parallel [\partial^{k}_{0}v]^{w} \parallel^{2}_{ E^{2(p-k)-1}(S_{t}^{w})}   )^{\frac{1}{2}}$;
\\
$\parallel v\parallel_{ \mathcal{K}^{p}(S_{t}^{w},\mathcal{Y})}=(\sum\limits_{k=0}^{p-1}  \parallel [\partial^{k}_{0}v] \parallel^{2}_{ H^{2(p-k)-1}(S_{t}^{w})}   )^{\frac{1}{2}}$ ; $\parallel v\parallel_{ \mathcal{E}^{p}(S_{t}^{w},\mathcal{Y})}=(\sum\limits_{k=0}^{p-1}  \parallel [\partial^{k}_{0}v]^{w} \parallel^{2}_{ E^{2(p-k)-1}(S_{t}^{w})}   )^{\frac{1}{2}}$;
\\
$\parallel v\parallel_{ \mathcal{K}^{p}(\mathcal{Y}_{t})}=( \parallel v \parallel^{2}_{ K^{p}(\mathcal{Y}_{t})}+ \sum\limits_{w=1}^{2}  \parallel v \parallel^{2}_{ \mathcal{K}^{p}(S_{t}^{w}, \mathcal{Y})}   )^{\frac{1}{2}}$ ; $\parallel v\parallel_{ \mathcal{E}^{p}(\mathcal{Y}_{t})}=( \parallel v \parallel^{2}_{ E^{p}(\mathcal{Y}_{t})}+  \sum\limits_{w=1}^{2} \parallel v \parallel^{2}_{ \mathcal{E}^{p}(S_{t}^{w}, \mathcal{Y})}   )^{\frac{1}{2}}$ ;\\
$\parallel v\parallel_{ \mathcal{K}^{p}_{1}(\mathcal{Y}_{t})}=( \parallel v \parallel^{2}_{ K^{p}(\mathcal{Y}_{t})}+ \sum\limits_{w=1}^{2}  \parallel v \parallel^{2}_{ \mathcal{K}^{p}_{1}(S_{t}^{w}, \mathcal{Y})}   )^{\frac{1}{2}}$ ; $\parallel v\parallel_{ \mathcal{E}^{p}_{1}(\mathcal{Y}_{t})}=( \parallel v \parallel^{2}_{ E^{p}(\mathcal{Y}_{t})}+  \sum\limits_{w=1}^{2} \parallel v \parallel^{2}_{ \mathcal{E}^{p}_{1}(S_{t}^{w}, \mathcal{Y})}   )^{\frac{1}{2}}$ ;\\
if the second members exist and the derivatives being taken in the sense of distributions.\\
We consider the following functional spaces:\\
$\bullet$ $C^{\infty}(\mathcal{Y}_{t,\sigma})$ is the space of restrictions to $\mathcal{Y}_{t,\sigma}$ of functions of class $C^{\infty}$ on $\mathbb{R}^{N}$.\\
$\bullet$ $K^{p}(\mathcal{Y}_{t,\sigma})$ (resp. $E^{p}(\mathcal{Y}_{t,\sigma})$) is the subspace of $H^{p}(\mathcal{Y}_{t,\sigma})$ formed of vector functions $v=(v_{r})$ for which $\parallel v\parallel_{ K^{p}(\mathcal{Y}_{t,\sigma})} < +\infty$ (resp. $\parallel v\parallel_{ E^{p}(\mathcal{Y}_{t,\sigma})} < +\infty$).\\
$\bullet$ $E^{p}(S_{t}^{w})$ is the subspace of $H^{p}(S_{t}^{w})$ formed of functions
vectors $v=(v_{r})$ for which $\parallel v\parallel_{ K^{p}(S_{t}^{w})} < +\infty$.\\$\bullet$ $\mathcal{K}^{p}(\mathcal{Y}_{t,\sigma})$ is
 the subspace of $K^{p}(\mathcal{Y}_{t,\sigma})$ formed of vector functions $v=(v_{r})$ for which
 $\parallel v\parallel_{ \mathcal{K}^{p}(\mathcal{Y}_{t,\sigma})} < +\infty$.\\
 $\bullet$ $\mathcal{E}^{p}(\mathcal{Y}_{t,\sigma})$ is
 the subspace of $E^{p}(\mathcal{Y}_{t,\sigma})$ formed of vector functions $v=(v_{r})$ for which
 $\parallel v\parallel_{ \mathcal{E}^{p}(\mathcal{Y}_{t,\sigma})} < +\infty$.\\
$\bullet$ $\hat{\mathcal{E}}^{p}(\mathcal{Y}_{t,\sigma})$ is the
closing
in $\mathcal{E}^{p}(\mathcal{Y}_{t,\sigma})$ of $C^{\infty}(\mathcal{Y}_{t,\sigma})$.\\
$\bullet$ $\hat{E}^{p}(S^{w}_{t})$ is the closure in
$E^{p}(S^{w}_{t})$ of the space of
 restrictions to $S^{w}_{t}$ of functions of class $C^{\infty}$ on $\mathbb{R}^{4}$.\\
$\bullet$ $\hat{E}^{p}_{loc}(S^{w})$ is the space of vector functions $v$ defined in $S^{w}$ and such that $\forall$ $t$ $\in$ $\mathbb{R}^{*}_{+}$, $v\mid_{S^{w}_{t}}$ $\in$ $\hat{E}^{p}(S^{w}_{t})$.\\
$\bullet$ $\hat{\mathcal{E}}^{p}_{loc}(\mathcal{Y}^{(g)})$ is the space of vector functions $v$ defined in $\mathcal{Y}^{(g)}$ and such that $\forall$ $t$ $\in$ $\mathbb{R}^{*}_{+}$, $v\mid_{\mathcal{Y}_{t,g(t)}}$ $\in$ $\hat{\mathcal{E}}^{p}(\mathcal{Y}_{t,g(t)})$.
\section{ Formulation and proofs of the results}
 \begin{defn}\cite{3,4} A part $\mathcal{Y}$ of $\mathbb{R}^{4}$ whose boundary contains $S$ is said to be causal if: $\forall$ $M_{0}$ $\in$ $ \mathcal{Y}$, $(C_{M_{0}}^{-})\subseteq \mathcal{Y}$ and $M_{0}$ is the unique singular point of $(C_{M_{0}}^{-})$; where $C_{M_{0}}^{-}$ is the half-cono$\ddot{i}$ of characteristic for $\bar{L}$ coming from $M_{0}$ and directed towards the past; $(C_{M_{0}}^{-})$ being the part of $C_{M_{0}}^{-}$ located between $M_{0}$ and $S$; $S_{0}(M_{0})$ denotes the 2-surface $C_{M_{0}}^{-}\cap S$.
\end{defn}

\begin{theorem}\label{er} 1)Let $s$ $\in$ $\mathbb{N}$, $s\geq6$. If the $A^{\lambda\mu}$
verify the structure hypothesis $(\mathcal{G}_{0})$, the $f_{r}$
verify the structure hypothesis $(\mathcal{G}_{1})$,
if the hypotheses $(\alpha_{\infty})$, $(\beta_{\infty})$, $(\gamma_{\infty})$, $\mathcal{(C)}$ and $\mathcal{(G)}$ are verified, and if the initial data $\varphi^{w}=(\varphi^{w}_{r})$ are such that:\\
(i) $\varphi^{w}$ $\in$ $\hat{E}^{2s-1}_{ loc}(S^{w})$\\
(ii) $\varphi^{1}=\varphi^{2}$ on $\Gamma$;\\
Then there exists a map $f$ of $\mathbb{R}^{*}_{+}$ $\rightarrow$ $\mathbb{R}^{*}_{+}$, and a unique solution $u=(u_{r})$ of
 hyper-quasilinear Goursat problem $(\ref{7})$ in the domain $\mathcal{Y}^{(f)}$ and such that $u$ $\in$ $\hat{\mathcal{E}}^{s}_{loc}(\mathcal{Y} ^{(g)})$.\\
2) If besides 1) we assume that the initial data $\varphi^{w}=(\varphi^{w}_{r})$ are
 restrictions to $S^{w}$ of functions of class $C^{\infty}$ on $\mathbb{R}^{2}\times B$,
  then there exists a map $f$ of $\mathbb{R}^{*}_{+}$ $\rightarrow$ $\mathbb{R}^{*}_{+}$, and a solution $u=(u_{r})$
   of the hyper-quasilinear Goursat problem $(\ref{7})$ defined in the domain $\mathcal{Y}^{(f)}$ and such
   that $u$ $\in$ $C^{\infty}(\mathcal{Y}^{(f)})$.

\end{theorem}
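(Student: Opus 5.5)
The proof follows the strategy announced in the introduction. First we build the semi-global solution of the generalized Kirchhoff integral system by semilinearization and a fixed point. Then we identify that solution with a genuine solution of $(\ref{7})$ near $S$ by means of the local spatio-characteristic theory of \cite{5}.

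\textbf{Step 1: data on $S^{w}$.} By $(\mathcal{G}_{1})$ and Remark (2i), restricting $(G_{r})$ to $S^{w}$ gives linear first order transport equations for $[\partial_{0}u_{s}]^{w}$. Indeed, the condition $\mathcal{(C)}$ in $(\ref{tsv})$ kills the terms in $[\partial_{0}^{2}u_{s}]^{w}$, and $(\mathcal{G}_{0})$ makes the coefficients depend only on the known data $\varphi^{w}$. These equations are solved globally on $S^{w}$, with initial value on $\Gamma$ obtained by compatibility of $\varphi^{1},\varphi^{2}$. Differentiating the equations in $x^{0}$ gives, inductively in $k$, linear transport equations for $[\partial_{0}^{k}u]^{w}$, $k\le s-1$. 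Using $\varphi^{w}\in\hat E^{2s-1}_{loc}(S^{w})$ and energy estimates along $S^{w}_{t}$, we obtain $[\partial^{k}_{0}u]^{w}\in \hat E^{2(s-k)-1}_{loc}(S^{w})$. Thus all the $\mathcal{E}^{s}(S^{w}_{t},\mathcal{Y})$-norms are finite for every $t$, with no smallness needed.

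\textbf{Step 2: semilinearization and fixed point.} Fix $T>0$ and a candidate $v$ in a closed ball of $\hat{\mathcal{E}}^{s}(\mathcal{Y}_{T,\sigma})$ whose traces $[\partial^{k}_{0}v]^{w}$ are those from Step 1. Freeze the principal part as $A^{\lambda\mu}(x^{\alpha},v_{s})$. Since $A$ is linear in $u$ by $(\mathcal{G}_{0})$, this is a semilinear operator whose coefficients are controlled linearly by $\|v\|_{\mathcal{E}^{s}}$. We then solve the semilinear Goursat problem with this principal part and right-hand side $f_{r}(x,u,\partial u)$, using the semi-global existence result of Houpa--Dossa \cite{4} for the associated generalized Kirchhoff integral system. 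This defines a map $\Phi:v\mapsto u$.

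The needed estimates are in the spirit of Choquet-Bruhat \cite{cho}. They come from the Kirchhoff representation over the conoids $C^{-}_{M_{0}}$, whose causality (Definition above) is guaranteed for $\sigma$ small by $(\mathcal{G})$ and the closeness of $A(\cdot,v)$ to $\bar A$. These estimates should show that, for $\sigma=\sigma(T)$ small enough, $\Phi$ maps the ball into itself. The ball is defined via the weighted norms $E^{s}$, $K^{s}$ with factors $\tau^{-1/2}$, and $s\ge 6$ gives the Sobolev embeddings needed to control $A(v)$ and its derivatives in $L^{\infty}$. They should also show that $\Phi$ is a contraction in the lower norm $\mathcal{E}^{1}$ (or $\mathcal{E}^{s-1}$). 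Weak compactness of the ball then yields a fixed point in $\hat{\mathcal{E}}^{s}(\mathcal{Y}_{T,\sigma(T)})$.

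Set $f(T)=\min_{T'\ge T}\sigma(T')$, or a suitable positive nonincreasing choice, so that the solutions on the different $\mathcal{Y}_{T,f(T)}$ are nested. Uniqueness in the small norm then makes them agree on overlaps, giving $u\in\hat{\mathcal{E}}^{s}_{loc}(\mathcal{Y}^{(f)})$.

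\textbf{Step 3: identification with $(\ref{7})$ and uniqueness.} The fixed point solves the integral system. To see it solves $(\ref{7})$ in the distributional and strong sense, we follow \cite{2,4}. Near each point of $\Gamma$, and then by continuation near $S$, the hyperboloids $P_{(\sigma)}$ are spacelike by $(\mathcal{G})$. This lets us pose a mixed spacelike-characteristic problem and invoke the local existence and uniqueness result of \cite{5}. Its solution also satisfies the Kirchhoff system, so by uniqueness for that system it coincides with our $u$. Uniqueness of $u$ as a solution of $(\ref{7})$ follows the same way.

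For part 2), $C^{\infty}$ data lie in $\hat E^{2s-1}_{loc}$ for every $s$. By uniqueness the solutions for different $s$ coincide on a common domain, and Sobolev embedding gives $u\in C^{\infty}$. The delicate point is to keep $f$ independent of $s$. I would handle it by a persistence-of-regularity estimate: the higher-norm estimates are linear in the top derivatives once the $\mathcal{E}^{6}$ norm is bounded.

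\textbf{Main obstacle.} The expected main obstacle is Step 2: obtaining the energy and Kirchhoff estimates uniformly on the thin domains $\mathcal{Y}_{T,\sigma}$ when the principal coefficients depend on $v$. The characteristic conoids move with $v$, so the causality of the domain and the singular weight $\tau^{-1}$ must be controlled simultaneously. I would first try to show that the deviation of $A(v)$ from $\bar A$ on $\mathcal{Y}_{T,\sigma}$ is $O(\sigma)$ in $C^{1}$. Here linearity in $(\mathcal{G}_{0})$ together with the fixed traces on $S$ gives $v-\bar u=O(x^{0}-|x^{1}|)$. This perturbation is then absorbed in the estimates of \cite{4}.
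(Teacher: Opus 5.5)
Step 1 is fine; it matches the paper's remark that the traces of the derivatives up to order four are determined on all of $S$. After that your route departs from the paper's, and its decisive step is asserted rather than proved. The tools you name cannot supply it. Everything rests on the Step 2 claim that, for $\sigma=\sigma(T)$ small, your map $\Phi$ sends a ball of $\hat{\mathcal{E}}^{s}(\mathcal{Y}_{T,\sigma})$ into itself and contracts in a lower norm.

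The semilinear theorem of \cite{4} and the Kirchhoff representation do not give this. For each frozen $v$ they give a solution on a domain whose size depends on $v$, together with pointwise bounds. They give no $\mathcal{E}^{s}$ bound on $\Phi(v)$ in terms of the radius $R$. They also need the principal coefficients, as functions of $x$, to have far more bounded derivatives (compare $(\alpha_{4})$ in Theorem \ref{rtt}) than $A^{\lambda\mu}(x^{\alpha},v_{s}(x^{\alpha}))$ has for $v\in\hat{\mathcal{E}}^{6}\subset C^{4}$. For the same reason your Step 3 cannot assert that the fixed point satisfies $(\mathcal{G}_{S})$: Theorem \ref{rtt} 1) assumes six derivatives, with bounded derivatives up to order five. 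In your route Step 3 is superfluous anyway, since a fixed point of $\Phi$ already solves $(\ref{7})$.

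Suppose instead you prove the self-map by energy estimates on $\mathcal{Y}_{T,\sigma}$, using slices $x^{0}=\tau\le T$, $S$ as characteristic past boundary and $P_{(\sigma)}$ as spacelike future boundary. Then the fluxes through $S^{w}_{T}$ are fixed by the data and are not small. The Gr\"{o}nwall constants depend on $R$ and $T$ but not on $\sigma$. So shrinking $\sigma$ alone does not give $\|\Phi(v)\|\le R$.

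What is missing is a term that is small in $\sigma$ uniformly in $R$. For example, center the ball at an extension $v_{0}$ of the Step 1 traces and estimate $w=\Phi(v)-v_{0}$, which has zero traces on $S$. Then show that the residual $A^{\lambda\mu}(x^{\alpha},v_{0s})\partial^{2}_{\lambda\mu}v_{0r}+f_{r}(x^{\alpha},v_{0s},\partial_{\nu}v_{0s})$ and the term $A^{\lambda\mu m}_{1}(v_{m}-v_{0m})\partial^{2}_{\lambda\mu}v_{0r}$ are small in $\sigma$ in $H^{s-1}(G_{\tau})$. This follows from a Poincar\'{e} inequality across the thin slices $G_{\tau}$, because $v-v_{0}$ vanishes to high order on $S$. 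Your bound $v-\bar u=O(x^{0}-|x^{1}|)$, with a constant depending on $\|v\|_{C^{2}}$, does not provide this.

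The paper never runs a fixed point in $\mathcal{E}^{s}$; it gets the $R$-independent control at the sup-norm level. Theorem \ref{rtt} solves the four-times-differentiated Kirchhoff system $(\mathcal{G}_{S})$ by a contraction, among bounded continuous functions, on a ball of radius $l$ centred at the trace data $\Phi_{S}$. This fixes $f(T)$ from the data alone. By uniqueness for that system, the local smooth solution near $\Gamma$ given by \cite{3} coincides with the integral solution up to fourth derivatives. Hence every solution on $\mathcal{Y}_{T_{1},f(T)}$ obeys the bound $K_{0}$, uniformly in $T_{1}$. Since $s\ge 6>\frac{3}{2}+4$, this yields $\|u\|_{\mathcal{E}^{s}(\mathcal{Y}_{T_{1},f(T)})}\le K$. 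Continuation through the local mixed spacelike--characteristic theory \cite{5} then reaches all of $\mathcal{Y}_{T,f(T)}$. Because $f(T)$ is fixed at the $C^{4}$ level, the same $f$ serves every $s$, so part 2) needs no separate persistence-of-regularity argument.

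A smaller point concerns your choice $f(T)=\min_{T'\ge T}\sigma(T')$, which can vanish since in general $\sigma(T')\to 0$. Nested domains would force a nondecreasing $f\le\sigma$, and they are not needed anyway. Take $f(T)=\sigma(T)$ and glue the solutions by uniqueness on overlaps.
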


   \begin{theorem}\label{rtt} Under the hypotheses $(\alpha_{4})$, $(\beta_{4})$,
   $(\gamma_{4})$, $(\lambda_{4})$, $(\mathcal{G}_{0})$ and $(\mathcal{G}_{1})$ we have:\\
1)- For any solution $u=(u_{r})$ of $(\ref{7})$, six times
differentiable and admitting derivatives up to order five
continuous and bounded, the functions $u_{r}$ as well as their derivatives
up to order four verify the following integral system: for
any point $M({x_{0}^{\alpha}})$ $\in$ $\mathcal{Y}$

 \begin{center}

 $(\mathcal{G}_{S}): \left\{
               \begin{array}{ll}
            4 \pi U_{S}(x_{0}^{\alpha})=\int_{0}^{2 \pi}d\lambda_{3}\int_{0}^{ \pi}d\lambda_{2}\sin \lambda_{2}\int_{0}^{\psi(x_{0}^{\alpha},\lambda_{h})}d\lambda_{1}  \mathcal{H}_{S}(U_{T},\Omega^{R}_{T},\hat{\Omega}^{R}_{T})& \hbox{} \\

~~~~~~~~~~~~~+ \int_{0}^{2 \pi}d\lambda_{3}\int_{0}^{ \pi}d\lambda_{2}\mathcal{J}_{S}([U_{T}],\Omega^{R}_{T})& \hbox{}\\
\Omega^{R}_{S}(x^{\alpha}_{0};\lambda_{h})=\Omega^{R}_{0S}+\int^{\lambda_{1}}_{0}H_{T}^{R}(U_{Q})\Omega^{T}_{S}d\lambda_{1} & \hbox{}
                \end{array}
              \right.
 $
 \end{center}
 where
  \begin{equation}\label{201s}
   \left\{
               \begin{array}{ll}
         U_{R}=(u_{r},u_{r \alpha}, u_{r \alpha \beta}, u_{r \alpha \beta \gamma},u_{r \alpha \beta \gamma \delta }),   & \hbox{} \\
         \mathcal{J}_{S}([U_{T}],\Omega^{R}_{T})= E^{i}_{S}([U_{T}],\Omega^{R}_{S})(\triangle^{1}_{i}-\frac{\partial \psi}{\partial \lambda_{h}}\triangle^{h}_{i}),& \hbox{} \\
         L_{S}^{R}=\frac{\partial^{2}}{\partial y^{i}\partial y^{j}}((\stackrel{ \text{\textasteriskcentered} }{[A^{ijT}_{1}]^{w}}\stackrel{ \text{\textasteriskcentered} }{\Phi^{w}_{T}}+\stackrel{ \text{\textasteriskcentered} }{[A^{ij}_{2}]^{w}})\sigma_{S}^{R})-\frac{\partial }{\partial y^{i}}(\stackrel{ \text{\textasteriskcentered} }{[B^{iR}_{T}]}\sigma_{S}^{T}),& \hbox{} \\
          \mathcal{H}_{S}(U_{T},\Omega^{R}_{T},\hat{\Omega}^{R}_{T}) =\square([U_{R}]L^{R}_{S}(U_{T},\Omega^{R}_{T},\hat{\Omega}^{R}_{T})+\sigma w^{R}_{S}  [f_{R}(U_{T})]),  & \hbox{}\\
           \Omega^{R}_{S}=(\omega^{R}_{S},~\omega^{R}_{S,i},\omega^{R}_{S,ij}),~ \hat{ \Omega}^{R}_{S}=(\frac{w^{R}_{S}-\delta^{R}_{S}}{\lambda_{1}};\frac{w^{R}_{S,i}}{\lambda_{1}};\frac{w^{R}_{S,ij}}{\lambda_{1}}),
           & \hbox{}\\  \Omega^{R}_{0S}=(\delta^{R}_{S},0,0),~~ w^{R}_{S,i}=\frac{\partial w^{R}_{S}}{\partial p^{0}_{i}}, ~ w^{R}_{S,ij}=\frac{\partial^{2} w^{R}_{S}}{\partial p^{0}_{i}\partial p^{0}_{j}},
               & \hbox{}\\ \bigtriangleup=\mid  \frac{D(x^{i})}{D(\lambda_{j})}\mid, ~\square=\frac{\bigtriangleup}{\sin \lambda_{2}}, ~\bigtriangleup^{i}_{j}=~minor~associated~with~\bigtriangleup, \sigma=-\mid \square \mid^{-\frac{1}{2}},  & \hbox{}\\p^{0}_{1}=\sin \lambda_{2}\cos \lambda_{3},~ p^{0}_{2}=\sin \lambda_{2}\sin \lambda_{3}, ~p^{0}_{3}=\cos \lambda_{2}
                & \hbox{}\\ sur ~S_{0}(M_{0})~on~a~\lambda_{1}=\psi(x^{\alpha}_{0};~\lambda_{h});~ i,j=1,2,3; ~\alpha,\beta,\gamma=0,...,3. & \hbox{}\\
       H_{T}^{R}(U_{Q}) ~is~a~triangular~matrix~of~the~functions~A^{\lambda\mu s}_{1},A^{\lambda\mu}_{2},f_{r}~and & \hbox{}\\of ~their~ derivatives~ up~ to~ order~ three.  & \hbox{}
                \end{array}
              \right.
\end{equation}The $w^{R}_{S}$, $E^{i}_{S}$ are the auxiliary functions introduced in \cite{3, 4}; the $ p^{0}_{i}$ are the parameters making it possible to identify the bicharacteristics coming from $M_{0}$, which generate $C_{M_{0}}^{-}$ , $\lambda_{1}$ is a parameter making it possible to identify the points of a given bicharacteristic of $C_{M_{0}}^{-}$ .\\
2)-There exists a positive constant $l$ and positive constants $C(l)$, $M(l)$, $B(l)$ such that for any causal domain $\mathcal{Y}_{0}$ whose boundary contains $S$ and which is contained in the domain $\mathcal{D}(C(l)$, $M(l)$, $B(l))$ defined by:
\begin{center}
$\left\{  (x^{\alpha}) \in Y : \left\{
               \begin{array}{ll}
         0<a < M(l)  & \hbox{} \\

  a\leq C(l)\mid x^{1}\mid
                \end{array}
              \right.  or~~ \left\{
               \begin{array}{ll}
         0<a < M(l)  & \hbox{} \\

     a >C(l)\mid x^{1}\mid  \hbox{} \\ x^{0}<B(l)
                \end{array}
              \right.  \right\}$
\end{center}with $a=x^{0}-\mid x^{1}\mid $, the integral system $(\mathcal{G}_{S})$ admits a unique solution $(U_{S},\Omega^{R}_{S})$ in the space of continuous and bounded functions and we have:\\
 $(i)$ $\forall$ $(x^{\alpha})$ $\in$ $\mathcal{Y}_{0}$, $\mid U_{S}(x^{0},x^{i})-\Phi_{S}(x^{0},x^{i}) \mid \leq l$; \\
 $(ii)$ the functions $U_{S}$ take on $S^{w}$ the values $\Phi_{S}=( [u_{s}]^{w},[u_{s \alpha}]^{w},[u_{s \alpha\beta}]^{w},[u_{s \alpha\beta \gamma}]^{w},[u_{s \alpha\beta \gamma\delta}]^{w})$.\\
\end{theorem}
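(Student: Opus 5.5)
We follow Foures-Bruhat's construction of the generalized Kirchhoff formula, adapted to the Goursat setting as in Houpa--Dossa \cite{4}, with the unknowns enlarged to handle the quasilinear principal part.

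\textbf{Part 1).} We differentiate the equations $(G_{r})$ up to four times. Since by $(\mathcal{G}_{0})$ the coefficients are affine in the unknowns, $A^{\lambda\mu}=A_{1}^{\lambda\mu s}u_{s}+A_{2}^{\lambda\mu}$, each differentiated equation has the form
\[
A^{\lambda\mu}(x,u)\,\partial^{2}_{\lambda\mu}(\partial^{k}u)+(\text{terms in }U_{R}),
\]
where the remainder is polynomial in the derivatives of order at most $k+1$. This gives a semilinear second-order system for $U_{R}=(u_{r},\dots,u_{r\alpha\beta\gamma\delta})$, with principal operator $A^{\lambda\mu}(x,u)\partial^{2}_{\lambda\mu}$ evaluated along the solution itself. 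Linearity in $u$ means that the derivatives of $A$ introduce no higher order terms. The lower-order part is built from the $f_{r}$ and the derivatives of $A_{1},A_{2}$ up to order three, which defines the triangular matrix $H^{R}_{T}$.

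For the operator $A^{\lambda\mu}(x,u)\partial^{2}_{\lambda\mu}$ we form the characteristic conoid $C^{-}_{M_{0}}$, parametrized by $(\lambda_{1},\lambda_{2},\lambda_{3})$ through the bicharacteristic equations with initial covector $p^{0}_{i}$. Along these bicharacteristics we introduce the Kirchhoff kernels $\sigma$ and $w^{R}_{S}$. The $w^{R}_{S}$ solve the transport system $dw/d\lambda_{1}=H w$ with $w(0)=\delta$, and we differentiate them in $p^{0}_{i}$ to obtain $\Omega^{R}_{S}$; this is the second line of $(\mathcal{G}_{S})$.

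We multiply the system by $\sigma w^{R}_{S}$ and integrate over $(C^{-}_{M_{0}})$, cut off at $S_{0}(M_{0})$, after excising a small tube around the vertex. Applying Green's formula with the adjoint operator $L^{R}_{S}$ and letting the tube shrink yields the term $4\pi U_{S}(x_{0})$ from the singular vertex, since $\square\sigma\sim\lambda_{1}$ and $\hat\Omega$ is regular. The boundary integral on $S_{0}(M_{0})=\{\lambda_{1}=\psi\}$ gives $\mathcal{J}_{S}$, expressed through the minors $\triangle^{h}_{i}$.

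The values $[U_{T}]$ on $S^{w}$ are the quantities $\Phi_{S}$. To compute them, note first that $(\mathcal{C})$ makes the $\partial_{0}^{2}$ coefficient vanish on $S^{w}$. Hypothesis $(\mathcal{G}_{1})$ then reduces the restriction of $(G_{r})$ to a linear first-order equation for $[\partial_{0}u]^{w}$. The higher transversal derivatives follow by iteration, each step again linear. The regularity assumptions $(\alpha_{4})$--$(\lambda_{4})$ are exactly what is needed for these manipulations with six derivatives.

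\textbf{Part 2).} We treat $(\mathcal{G}_{S})$ as a fixed point problem $(U,\Omega)=\mathcal{T}(U,\Omega)$ on the closed set
\[
X_{l}=\Bigl\{(U,\Omega)\ \text{continuous and bounded}:\ |U-\Phi|\le l,\ |\Omega-\Omega_{0}|\le l\Bigr\}.
\]
The key is the geometry of the domain $\mathcal{D}$. Since $a=x^{0}-|x^{1}|$ is small, the conoid stays close to $S$. In the region $a\le C|x^{1}|$ the point lies near one hypersurface $S^{w}$ far from $\Gamma$, and the $\lambda_{1}$-length $\psi$ is $O(a)$ uniformly in $x^{0}$. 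In the region $a>C|x^{1}|$ we are near $\Gamma$, where $x^{0}<B$ controls the size. In both regions the volume integral is then $O(M)$. Because $\psi(x_{0},\lambda_{h})\to0$ as $a\to0$, the boundary term $\mathcal{J}_{S}$ reproduces $\Phi_{S}$ up to $O(a)$. With constants depending on $l$ through bounds of $A,f$ on the $l$-neighbourhood of the data, choosing $M(l)$ and $B(l)$ small makes $\mathcal{T}$ map $X_{l}$ into itself. For contraction we estimate the Lipschitz dependence of $\mathcal{T}$, including the dependence of the conoid itself on $U$, since the characteristics are determined by $A(x,u)$. We handle this through the Foures-Bruhat variational equations, using the derivatives $\Omega_{,i}$ and $\Omega_{,ij}$. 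Property (ii) follows from the limit $\psi\to0$ on $S^{w}$, and uniqueness from the contraction.

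\textbf{Main obstacle.} The hardest step is obtaining estimates on $\psi$, $\triangle$ and the minors $\triangle^{h}_{i}$ that are uniform along the whole noncompact hypersurfaces (large $x^{0}$ with small $a$). This is where $(\mathcal{G})$ and the causality of $\mathcal{Y}_{0}$ enter. We would first try to reproduce the flat Minkowski bounds of \cite{4}, $\psi\le Ca$, and then treat the quasilinear deviation perturbatively in $l$.
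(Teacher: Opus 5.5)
Your Part 1 and the architecture of Part 2 agree with the paper's sketch: four differentiations so that the conoid, $\sigma$, $w^{R}_{S}$ and the derivatives of $A(x,u)$ become functions of $U$; Green's formula on the excised conoid; $\Phi_{S}$ obtained on $S$ from $(\mathcal{C})$ and the linear transport equations given by $(\mathcal{G}_{1})$; and a contraction on a ball centred at $\Phi_{S}$. The paper only refers to Houpa--Dossa for the estimates, so the geometric estimate is the step you actually have to supply. The one you give is false.

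In the region $a\leq C|x^{1}|$, $\psi$ is not $O(a)$ uniformly in $x^{0}$, even for the flat operator. Take $M_{0}$ with $x^{1}_{0}>0$ and $x^{0}_{0}=x^{1}_{0}+a$, and the past bicharacteristics $x^{0}=x^{0}_{0}-\lambda_{1}$, $x^{i}=x^{i}_{0}-\lambda_{1}p^{0}_{i}$. Along them $x^{0}-x^{1}=a-\lambda_{1}(1-p^{0}_{1})$ and $x^{0}+x^{1}=(x^{0}_{0}+x^{1}_{0})-\lambda_{1}(1+p^{0}_{1})$, so
\[
\psi=\min\Bigl(\frac{a}{1-p^{0}_{1}},\ \frac{x^{0}_{0}+x^{1}_{0}}{1+p^{0}_{1}}\Bigr).
\]
This is of order $x^{0}_{0}$ when $p^{0}_{1}$ is close to $1$: those bicharacteristics run along the generators of $S^{1}$ back to $\Gamma$ and end on $S^{2}$.

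Here $\square=\lambda_{1}^{2}$ and $\sigma=-\lambda_{1}^{-1}$. For constant $f$, the contribution of $f$ to the volume integral is $-\frac{f}{2}\int\psi^{2}\,d\Omega$. An exact computation gives $\int\psi^{2}\,d\Omega=2\pi a(x^{0}_{0}+x^{1}_{0})=2\pi\bigl((x^{0}_{0})^{2}-(x^{1}_{0})^{2}\bigr)$, which is not $O(a)$.

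The loss is real, not an artefact of the bound. For $\square u+f=0$ with $f$ constant and zero data, the solution is $u=-\frac{f}{4}\bigl((x^{0})^{2}-(x^{1})^{2}\bigr)$. It vanishes on $S$ but is of size $\frac{|f|}{2}\,a\,x^{0}$ at distance $a$ from $S^{1}$. For the same reason, $\psi\to0$ is not uniform as $a\to0$, and $\mathcal{J}_{S}-4\pi\Phi_{S}$ is not $O(a)$ uniformly: $S_{0}(M_{0})\cap S^{1}$ is a paraboloid reaching back to $\Gamma$ with transverse width of order $\sqrt{a\,x^{0}}$.

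Consequently, taking $M(l)$ small makes your map neither a self-map of $X_{l}$ nor a contraction on the part of $\mathcal{D}(C(l),M(l),B(l))$ where $a\leq C(l)|x^{1}|$ and $x^{0}$ is large. The fallback in your last paragraph (flat bound $\psi\leq Ca$, then perturbation in $l$) also collapses, because the flat bound itself fails.

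What the Kirchhoff formula controls is $a(x^{0}+|x^{1}|)=(x^{0})^{2}-(x^{1})^{2}$, the parameter of the hyperboloids $\mathcal{P}_{(\sigma)}$ in $(\mathcal{G})$. It also involves the length $\sim x^{0}$ of the bicharacteristics, which enters the Lipschitz bounds for the $U$-dependent conoid. You need to redo the self-map and contraction estimates in one of two settings:
\begin{itemize}
\item on domains $\mathcal{Y}_{T,\sigma}$ with $\sigma$ small depending on $T$, which is all that the proof of Theorem~\ref{er} uses from part 2);
\item on sets where $(x^{0})^{2}-(x^{1})^{2}$ is small.
\end{itemize}
A band of thickness independent of $x^{0}$ cannot be reached by this argument.
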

\textbf{Idea of the proof of Theorem $\ref{rtt}$}\\
1. Formally writing, for problem $(\ref{7})$, the Kirchhoff formulas as in the case of semilinear systems, we note that although the functions $A^{\lambda\mu}$
 depend linearly on the unknowns $u_{s}$
	?, the resulting identities do not define an integral system. Therefore, we reduce the study of problem $(\ref{7})$ to the following hyper-quasilinear Goursat problem:
\begin{equation}\label{8f}
\left\{
                \begin{array}{ll}
                  (H_{r}):  A^{\lambda \mu m}_{1}(x^{\alpha})u_{m}\partial^{2}_{\lambda\mu}u_{r}+
                   A_{2}^{\lambda \mu}(x^{\alpha})\partial^{2}_{\lambda\mu}u_{r}
                   +f_{r}(x^{\alpha},u_{s}, \partial_{\nu}u_{s})=0 ~~in ~~~~\mathcal{Y}&  \hbox{} \\ & \hbox{} \\~~~~~~~~~~~~~

~~~~~~u_{r}=\bar{u}_{r}=\varphi_{r}^{w} ~~on ~~S^{w};\end{array}
              \right.
\end{equation}
Writing formally, for problem $(\ref{8f})$, the Kirchhoff formulas as in the case of semilinear systems, we observe that although the functions $f_{r}$ depend linearly on the first derivatives of the unknowns $u_{s}$?, the resulting identities do not define an integral system either.
It is worth noting that if the functions $A^{\lambda\mu}$ are linear with respect to the unknowns $u_{s}$, and if the auxiliary functions appearing in the Kirchhoff formulas do not depend on the derivatives of the functions $u_{r}$, then the formally written Kirchhoff relations define a nonlinear integral system. In this case, it becomes possible to apply results concerning semilinear problems to the hyper-quasilinear Goursat problem $(\ref{8f})$, provided that the auxiliary functions do not depend on the derivatives of the unknown functions $u_{r}$.
To ensure that these auxiliary functions are independent of the derivatives of $u_{s}$, one is led to differentiate the equations $(H_{r})$ four times and to apply to the resulting system obtained by adjoining the equations $(H_{r})$ to those derived after four differentiations arguments analogous to those established by D. Houpa and M. Dossa \cite{4} for semilinear systems.
\\2.  The restrictions to S of the derivatives up to order four of any possible solution of the hyper-quasilinear problem $(\ref{8f})$, denoted by $(\Phi_{S})$, are uniquely determined on the whole of $S$. The proof of 2) is then similar to that given in \cite{3, 4} in the case of the semilinear Goursat problem.\\
3. The solution of the integral system $(G_{S})$
 associated with problem $(\ref{8f})$ is constructed in the space of bounded continuous functions defined on a causal domain $\mathcal{Y}_{1}$ a neighbourhood of $S$ in $\mathcal{Y}$ (endowed with the metric of uniform convergence), as a fixed point of a contraction mapping $\Theta$ sending a ball centered at $(\Phi_{S})$ into itself.
The proof of 3) is then similar to that given in \cite{3, 4} in the case of the semilinear Goursat problem.\par
\textbf{Idea of the proof of the theorem $\ref{er}$}.\\
 To establish this result, it suffices to show that $\forall$ $T \in \mathbb{R}^{*}_{+}$, there exists $f(T)$ $\in$ $\mathbb{R}^{*}_{+}$
 such that the problem $(\ref{7})$ admits in $\mathcal{Y}_{T,f(T)}$ a unique solution $u$ $\in$ $\hat{\mathcal{E}}^{s}(\mathcal{Y} ^{(f)})$.\\
For reasons of density and completeness of the Sobolev type spaces considered, we can assume all the initial data $C^{\infty}$.
By theorem 3.2, for all $T$ $\in$ $\mathbb{R}^{*}_{+}$,
it exists, given the geometric hypothesis $(\mathcal{G})$, $f(T)$ $\in$
$\mathbb{R}^{*}_{+}$ such that
 integral system $(\mathcal{G}_{S})$ of Kirchhoff formulas associated with the hyper-quasilinear systems of Goursat $(\ref{7})$ admits
 a unique solution $(v_{r},v_{r\alpha},v_{r\alpha \beta}, v_{r\alpha \beta \gamma},v_{r\alpha \beta \gamma\theta})$ in
  the space of continuous and bounded functions in $\mathcal{Y}_{T,f(T)}$.\\
According to \cite{3}, there exists $T_{0}$ $\in$ $]0,T]$ such that in the
domain $\mathcal{Y}_{T_{0},f(T)}$ the problem $(\ref{7})$ admits a
unique solution $u=(u_{r})$ $\in$ $C^{\infty
}(\mathcal{Y}_{T_{0},f(T)})$. We deduce that the functions $u_{r}$ and
its derivatives up to order four verify the integral system
$(\mathcal{G}_{S})$. By uniqueness of the solution of this system
integral we therefore have: $v_{r}=u_{r}$, $
v_{r\alpha}=\partial_{\alpha}u_{r}$,
$v_{r\alpha\beta}=\partial^{2}_{\alpha\beta}u_{r}$, $
v_{r\alpha\beta\gamma}=\partial^{3}_{\alpha\beta\gamma}u_{r}$,
$v_{r\alpha\beta\gamma
\theta}=\partial^{4}_{\alpha\beta\gamma\theta}u_{r}$.\\ We deduce that
$\forall$ $T_{1}$ $\in$ $]0,T]$, for every solution  $u$ of $(\ref{7})$ defined in
 $\mathcal{Y}_{T_{1},f(T)}$ satisfies the a priori estimate:
$\parallel u\parallel_{\mathcal{E}^{s}(\mathcal{Y}_{T_{1},f(T)})}\leq
K$ where $K$ is a strictly positive constant which only depends on $T$,
$\sum\limits_{w=1}^{2}\parallel \varphi^{w}
\parallel_{E^{2s-1}(S^{w}_{T})}$ and the constant
$K_{0}=\max\limits_{r,\alpha,\beta,\gamma}\max\limits_{(x^{\alpha})
\in \mathcal{Y}_{T_{1},f(T)}}\lbrace \mid v_{r}(x^{\alpha}) \mid,
\mid v_{r\alpha}(x^{\alpha}) \mid, \mid v_{r\alpha\beta}(x^{\alpha})
\mid, \mid v_{r\alpha\beta\gamma}(x^{\alpha}) \mid,\mid
v_{r\alpha\beta\gamma\theta}(x^{\alpha}) \mid  \rbrace$ because $s\geq
6> \frac{3}{2}+4$.
It follows, via the local resolution of a mixed spacelike characteristic Cauchy problem arising in the foliation of the domain by spacelike hypersurfaces (cf. \cite{5}), and using the results of \cite{2,4,47}, that problem $(\ref{7})$ admits a unique solution in the whole domain  $Y_{T,f(T)}$
, and therefore in the entire domain $\mathcal{Y}^{(f)}=\cup_{T \in \mathbb{R}^{*}_{+}}\mathcal{Y}_{T,f(T)}$.

\section{Applications to Einstein's vacuum equations in harmonic gauge } The purpose of this section is to apply Theorem $\ref{er}$, established in the previous paragraph, to the vacuum Einstein equations in an unknown spacetime $\mathcal{M}$.
  Unfortunately this application is not carried out without difficulties. In fact, we
considers in a space-time $\mathcal{M}$, a system of relativistic field equations or gauge theories, with initial conditions carried by a
$S$ spatial or characteristic hypersurface. The Cauchy problem thus obtained
is, in general, ill-posed because the field equations expressed in a system
of any coordinates are not, in general, a system of evolution (hyperbolic or parabolic). To resolve this difficulty, we have to add
to the system of equations of the fields studied an additional condition called \textbf{gauge condition} which, taking into account the deep structure of the equations
studied, must have the following properties:
  \par $\bullet$ when this gauge condition is verified everywhere in space-time, the
system of field equations studied is reduced to a system of evolution.
 \par $\bullet$ when the associated evolution system is verified everywhere and the condition
gauge is verified on the initial hypersurface $S$, then this condition is
checked everywhere. \par It follows that when a gauge condition is chosen, the Cauchy problem
for the field equations is split into two sub-problems: the problem of
initial constraints and the problem of evolution.\\
The problem of initial constraints consists of constructing, from the choice
arbitrary on $S$ of certain components of the fields called \textbf{independent data}, complete initial data such as the solution of the problem of
the evolution associated with these initial data verifies the gauge condition on $S$.
The problem of evolution consists of resolving the system in space-time
of evolution (also called reduced equations) obtained for initial data
complete, solution of the problem of initial constraints.
We therefore consider a space time $(\mathcal{M}, g)$
where $\mathcal{M}$ is a differentiable manifold of dimension 4,
separated, infinitely countable, of class $C^{\infty}$ and $g$ a
unknown Lorentzian metric i.e. hyperbolic Riemannian
normal, twice covariant symmetric tensor of signature +~-~-~-
and such that the geometric hypothesis $(\mathcal{G})$ is verified.
We consider $L=g^{\lambda\mu}\partial_{\lambda\mu}^{2}$ a $x^{0}$-hyperbolic differential operator of the second order, $S^{w}(w=0,1)$ of equations $S^{w}=\{ (x^{\alpha})\in
\mathcal{M}: x^{0}=(-1)^{w+1}x^{1}\}$ two hypersurfaces
characteristics for the second order differential operator
$L$ and secants
following the 2-surface $\Gamma$ of $\mathcal{M}$ defined by
$x^{0}=0=x^{1}$. We suppose
furthermore that $\forall \sigma \in \mathbb{R}^{*}_{+}$, the hypersurface
$\mathcal{P}_{\sigma}$ defined by:
$(x^{0})^{2}-(x^{1})^{2}=\sigma^{2}$ is spatial for $L$
that is, the geometric hypothesis $(\mathcal{G})$ is
verified. We recall that the geometric frame and the frame
functional are the same as those in the previous section. So we
has: $\mathcal{Y}=\{ (x^{\alpha})\in \mathcal{M}: x^{0}\geq \mid
x^{1}\mid\}$, $\mathcal{Y}_{T}=\mathcal{Y}\bigcap\{(x^{\alpha})\in
\mathcal{M}: x^{0}\leq T\} $, $S^{w}_{T}=S^{w}\bigcap
\mathcal{Y}_{T}$, $\mathcal{Y}_{(\sigma)}=\lbrace (x^{\alpha}) \in
\mathcal{M}:
 \mid x^{1}\mid \leq x^{0} \leq \sqrt{(x^{1})^{2}+\sigma^{2}} \rbrace$,
   $\mathcal{Y}_{T,\sigma}=\mathcal{Y}_{T}\cap
   \mathcal{Y}_{(\sigma)}$,
 $\mathcal{Y}^{(f)}=\cup_{T \in \mathbb{R}^{*}_{+}} \mathcal{Y}_{T,f(T)}$, avec $T,\sigma \in \mathbb{R}^{*}_{+}$,
 $f$ est une application de $\mathbb{R}^{*}_{+}$ dans $ \mathbb{R}^{*}_{+}$.
 Let $x=(x^{\alpha})=(x^{0},x^{1},x^{a})$,
$\alpha=0,1,2,3$; $a=2.3$; the global coordinate system of
$\mathcal{M}$. In the coordinate system $(x^{\alpha})$, the
Einstein's vacuum equations are written in the following form:
 \begin{equation}\label{800f}
   R_{\alpha\beta}=0\end{equation}
where $\bullet$ The $ R_{\alpha\beta}$ designate the components of the
Ricci tensor of the metric $g$ defined by:
\begin{equation}\label{800}
R_{\alpha\beta}(g)\equiv\Gamma^{\lambda}_{\alpha\beta,\lambda}-\Gamma^{\lambda}_{\alpha\lambda,\theta}+
\Gamma^{\lambda}_{\lambda\theta}\Gamma^{\theta}_{\alpha\beta}-\Gamma^{\lambda}_{\beta\theta}\Gamma^{\theta}_{\alpha\lambda}
\end{equation}
 and who
decompose (cf. \cite{ty}) in the coordinate system
$(x^{\alpha})$ in the following form:\begin{equation}\label{803}
R_{\alpha\beta}(g)= \tilde{R}_{\alpha\beta}(g)
+\frac{1}{2}(g_{\lambda\alpha}\Gamma^{\lambda}_{,\beta}+g_{\lambda\beta}\Gamma^{\lambda}_{,\alpha})
\end{equation}with:
\begin{equation}\label{ty}
\tilde{R}_{\alpha\beta}(g)=-\frac{1}{2}g^{\lambda\gamma}g_{\alpha\beta,\lambda\gamma}+Q_{\alpha\beta}(g,\partial
g)~~~and~~\Gamma^{\lambda}=g^{\alpha\beta}\Gamma^{\lambda}_{\alpha\beta};
\end{equation}
and

\begin{eqnarray*}
Q_{\alpha\beta}(g,\partial g)   &=&
\frac{1}{2}(g_{\delta\alpha,\beta}+g_{\delta\beta,\alpha})\Gamma^{\delta}+
\frac{1}{2}g^{\delta\eta}g^{\lambda\mu}(g_{\lambda\delta,\beta}g_{\alpha\eta,\mu}+g_{\lambda\delta,\alpha}g_{\beta\eta,\mu})
-\frac{1}{4}g^{\delta\eta}g^{\lambda\mu}g_{\delta\lambda,\alpha}g_{\mu\eta,\beta}\\
 &-&\frac{1}{2}g^{\delta\eta}g^{\lambda\mu}g_{\eta\lambda,\delta}(g_{\mu\beta,\alpha}+g_{\mu\alpha,\beta}-g_{\alpha\beta,\mu}) +
\frac{1}{4}g^{\delta\eta}g^{\lambda\mu}g_{\delta\eta,\mu}(g_{\alpha\lambda,\beta}+
g_{\beta\lambda,\alpha}-g_{\alpha\beta,\lambda}) \\
&-&
\frac{1}{2}g^{\delta\eta}g^{\lambda\mu}g_{\delta\alpha,\lambda}(g_{\mu\beta,\eta}-g_{\eta\beta,\eta})
\end{eqnarray*}
$\bullet$ The $g_{\alpha\beta}$ represent the components of the unknown metric $g$;\\
 $\bullet$ Commas ","
as indices designate the usual partial derivatives. By
example \begin{center} $\frac{\partial g_{\alpha\beta}}{\partial
x^{\theta}
}=g_{\alpha\beta,\theta}=\partial_{\theta}g_{\alpha\beta}$
\end{center}
$\bullet$ The $\Gamma^{\lambda}_{\alpha\beta}$ are the symbols of
Christoffel of the metric $g$ defined in coordinates $(x^{\alpha})$
by:\begin{equation}\label{802}
\Gamma^{\lambda}_{\alpha\beta}=\frac{1}{2}g^{\lambda\theta}(g_{\beta\theta,\alpha}
+g_{\alpha\theta,\beta}-g_{\alpha\beta,\theta});
\end{equation}
$\bullet$ The $g^{\alpha\beta}$ are the components of the matrix
inverse of that of components $g_{\alpha\beta}$.

\begin{defn}\cite{ty}
We say that the coordinate system $(x^{\alpha})$,
$\alpha=0,1,2,3$ checks the harmonic gauge condition on
$(\mathcal{M}, g)$ if:
\begin{equation}\label{kui}
\Gamma^{\alpha}=0,~~\alpha=0,1,2,3.
\end{equation}
\end{defn}By combining the relations   $(\ref{800f})$, $(\ref{803})$,
$(\ref{ty})$ and $(\ref{kui})$ we have:
\begin{equation}\label{tyr}
\tilde{R}_{\alpha\beta}(g)=-\frac{1}{2}g^{\lambda\gamma}g_{\alpha\beta,\lambda\gamma}+Q_{\alpha\beta}(g,\partial
g)=0
\end{equation}The $(g_{\alpha\beta})$ being solution of the system
 reduces $(\ref{tyr})$, passing to restrictions on $S^{w}$ and using the fact that for any function $g_{\alpha\beta}$ we have:
  \begin{equation}\label{hyr}
                           [\partial_{i} g_{\alpha\beta}]^{w}=\partial_{i} [g_{\alpha\beta}]^{w}+ (-1)^{w}\delta_{1i}[\partial_{0} g_{\alpha\beta}]^{w}
                        \end{equation} we obtain the nonlinear system
  of the first order of unknown
$[\partial_{0}g_{\alpha\beta}]^{w}$ following:
\begin{multline}\label{tyrr}
\tilde{R}_{\alpha\beta}(g)\mid_{S^{w}}=-\frac{1}{2}([g^{0i}]^{w}-(-1)^{w+1}\delta_{1j}[g^{ij}]^{w})\partial_{i}[\partial_{0}g_{\alpha\beta}]^{w}
\\+Q_{\alpha\beta}^{1 }(x,[g]^{w},\partial_{i}[g]^{w},[\partial_{0}g]^{w})+Q_{\alpha\beta}^{2}(x,[g]^{w},[\partial_{0}g]^{w})=0
\end{multline}where:
$ \mu,\nu,\alpha,\beta,\lambda,\gamma=0,1,2,3;~~i,j=1,2,3;~~w=0,1.$
\\
$\bullet$ the $Q_{\alpha\beta}^{1}$ are linear terms in $K^{w}$;\\
$\bullet$ the $Q_{\alpha\beta}^{2}$ are homogeneous quadratic forms of degree 2 in $K^{w}$.\\
Since the functions $[g]^{w}$ and $\partial_{i}[g]^{w}$ are known then the equation $(\ref{tyrr})$ can be put in the following form:
\begin{equation}\label{tyxwvt}
         -\frac{1}{2}\sum\limits_{i=1}^{3}([g^{0i}]^{w}-(-1)^{w+1}\delta_{1j}[g^{ij}]^{w})\partial_{i}K_{\alpha\beta}^{w}=F(K_{\alpha\beta}^{w}), ~~\alpha,\beta=0,...,3,j=1,...,3
       \end{equation}
      By setting $K^{w}= (K_{\alpha\beta}^{w})$ we have:
\begin{equation}\label{tyxwvt}
    -\frac{1}{2}\sum\limits_{i=1}^{3}([g^{0i}]^{w}-(-1)^{w+1}\delta_{1j}[g^{ij}]^{w}) K^{w}=F(K^{w})=L(K^{w})+Q(K^{w},K^{w})
\end{equation}with  $L(K^{w})=-Q_{\alpha\beta}^{1}$ and $Q(K^{w},K^{w})=-Q_{\alpha\beta}^{2}$.\\
Hence $(\ref{tyxwvt})$ is a first order nonlinear differential system.\\
\textbf{1. Hyperbolicity and regularity of $L= -\frac{1}{2}\sum\limits_{i=1}^{3}([g^{0i}]^{w}-(-1)^{w+1}\delta_{1j}[g^{ij}]^{w}) \partial_{i}$ }\\
To do this, we will show that its main symbol admits real eigenvalues.
Since the principal symbol corresponds to the part of the differential operator which contains the highest order derivatives, then the operator considered here is:
\begin{equation} -\frac{1}{2}\sum\limits_{i=1}^{3}([g^{0i}]^{w}-(-1)^{w+1}\delta_{1j}[g^{ij}]^{w}) \partial_{i}K^{w}_{\alpha\beta}, j=1,...,3
\end{equation}The side $F(K^{w}_{\alpha\beta})$ does not contribute to the main symbol, because it does not contain derivatives.
\\If $K^{w}$ is seen as a vector of dimension 16 (since $\alpha,\beta=0,...,3$):
$ K^{w}=(K_{00}^{w},...,K_{33}^{w}) \in \mathbb{R}^{16}$
then the main symbol $A(\xi)$ (with $\xi=(\xi_{1},\xi_{2},\xi_{3})\in \mathbb{R}^{3}$) acts on $K^{w}$ by diagonal multiplication. So it's a diagonal matrix $16\times 16$.
\begin{equation} A(\xi)=diag(-\frac{1}{2}\sum\limits_{i=1}^{3}([g^{0i}]^{w}-(-1)^{w+1}\delta_{1j}[g^{ij}]^{w}) \xi_{i})_{16 times}
  \end{equation}
The characteristic polynomial $P(\lambda,\xi)=det(A(\xi)-\lambda I)=0$.\\As $A(\xi)$ is diagonal, all these eigenvalues are:
\begin{equation}
  \lambda_{j}=-\frac{1}{2}\sum\limits_{i=1}^{3}([g^{0i}]^{w}-(-1)^{w+1}\delta_{1j}[g^{ij}]^{w}) \xi_{i}~,~j=1,...,3.
\end{equation}
Since the eigenvalues $\lambda_{j}$ are linear combinations of the components $[g^{0i}]^{w}$ and $[g^{ij}]^{w}$ with real numbers $\xi_{i}$ then the $\lambda_{j}$ are real numbers so the operator $L$ is hyperbolic.\\
Since $\lambda_{j}$ is a linear combination of $[g^{0i}]^{w}$ and $[g^{ij}]^{w}$ with real numbers $\xi_{i}$ which are of classes $C^{\infty}$ then $\lambda_{j}$ $\in$ $C^{\infty}$ therefore the regularity condition is satisfied.\\
\textbf{2. The quadratic form $Q$ is dissipative}\\ To do this, it suffices to show that the quadratic form $Q(K^{w},K^{w}) \leq 0$.\\ Let us set
\begin{equation}\label{tws}
 Q(K^{w},K^{w})=T^{1}+T^{2}+T^{3}+T^{4}+T^{5}
\end{equation}
with \begin{center}
     $T^{1}  = \frac{1}{2}[g^{\delta\eta}]^{w}[g^{\lambda\mu}]^{w}(X_{\lambda\delta}X_{\alpha\eta}+X_{\lambda\delta}X_{\beta\eta}) $~,~
    $T^{2}   = -\frac{1}{4}[g^{\delta\eta}]^{w}[g^{\lambda\mu}]^{w}X_{\delta\lambda}X_{\mu\eta}$
   \end{center}
   \begin{center}
    $T^{3}   = -\frac{1}{2}[g^{\delta\eta}]^{w}[g^{\lambda\mu}]^{w} X_{\eta\lambda}(  X_{\mu\beta}+ X_{\mu\alpha}-X_{\alpha\beta}) $
   ~and~
     $T^{4}   = \frac{1}{4}[g^{\delta\eta}]^{w}[g^{\lambda\mu}]^{w}X_{\delta\eta}(Xg_{\alpha\lambda}+
X_{\beta\lambda}-X_{\alpha\beta})$\end{center}
where $X_{\lambda\delta}=[\partial_{0}g_{\lambda\delta}]^{w}$. Using the identity
  $ab=-\frac{1}{2}[(a-b)^{2}-a^{2}-b^{2}]$   which allows each product $ab$ to be transformed into a square so as to write $Q(K^{w},K^{w})$ as the sum of the negative quantities,
 we have: $ Q(K^{w},K^{w})
   = -\sum\limits C_{ijkl}(X_{ij}-X_{kl})^{2}
   \leq 0$
with $ C_{ijkl}=[g^{\delta\eta}]^{w}[g^{\lambda\mu}]^{w}\geq 0$, since the metric g has signature  $+---$. \\
\textbf{3. Energy estimate for the system}\\
Since system $(\ref{tyxwvt})$ is symmetrizable, we define the associated standard energy by:

 \begin{equation}\label{ml} E(t)=\frac{1}{2}\int_{\mathbb{R}^{3}}\sum\limits_{\alpha, \beta=0,...,3}\mid K^{w}_{\alpha \beta}(t,x)\mid ^{2}dx=\frac{1}{2}\int_{\mathbb{R}^{3}}\parallel K^{w}_{\alpha \beta}(t,x) \parallel^{2}dx \end{equation} So $E(t)$
 is equivalent to the norm $L^{2}$ on $\mathbb{R}^{3}$.\\
By differentiating the energy with respect to time we obtain:
\begin{equation}\label{xwq}
 \frac{dE(t)}{dt} = \int_{\mathbb{R}^{3}}
K^{w}_{\alpha \beta}(t,x)\partial_{t}K^{w}_{\alpha \beta}(t,x) dx
\end{equation}

 Since $[g^{00}]^{w}>0 $ and $\partial_{t}$
corresponds to i=0, we thus have:
\begin{eqnarray*}
 \partial_{t}K^{w}_{\alpha \beta}+\sum\limits_{i=1}^{3}(\frac{[g^{0i}]^{w}-(-1)^{w+1}\delta_{1j}[g^{ij}]^{w}}{[g^{00}]^{w}})\partial_{i}K^{w}_{\alpha \beta}  &=&- \frac{2}{[g^{00}]^{w}}F(K^{w}_{\alpha \beta})
\end{eqnarray*}  By replacing $\partial_{t}K^{w}_{\alpha \beta}$ with its value in $(\ref{xwq})$
we have:
\begin{eqnarray*}
 \frac{dE(t)}{dt} &=&-\int_{\mathbb{R}^{3}} K^{w}_{\alpha \beta} \sum\limits_{i=1}^{3}(\frac{[g^{0i}]^{w}-(-1)^{w+1}\delta_{1j}[g^{ij}]^{w}}{[g^{00}]^{w}})\partial_{i}K_{\alpha \beta}^{w}dx + \int_{\mathbb{R}^{3}} \frac{2}{[g^{00}]^{w}}F(K^{w}_{\alpha \beta})K_{\alpha \beta}^{w}dx
\end{eqnarray*}
By integration by parts, assuming that $K^{w} \rightarrow 0$ as $x \rightarrow \infty$, we obtain:
\begin{center}
$\int_{\mathbb{R}^{3}} K_{\alpha\beta} \sum\limits_{i=1}^{3}(\frac{[g^{0i}]^{w}-(-1)^{w+1}\delta_{1j}[g^{ij}]^{w}}{[g^{00}]^{w}})\partial_{i}K^{w}_{\alpha \beta}dx=0$
\end{center}
Hence
\begin{eqnarray*}
  \frac{dE(t)}{dt}
   &=& \int_{\mathbb{R}^{3}}\frac{2}{[g^{00}]^{w}} L(K^{w}_{\alpha \beta})K^{w}_{\alpha \beta}dx +  \int_{\mathbb{R}^{3}}\frac{2}{[g^{00}]^{w}} Q(K^{w}_{\alpha \beta},K^{w}_{\alpha \beta})K_{\alpha \beta}^{w}dx
\end{eqnarray*}
Since L is a linear and continuous functional in $K^{w}$
, and $Q$ is a dissipative form, there exists a constant $c>0$ such that:
 \begin{equation}\label{ycx}
\frac{dE(t)}{dt}\leq c \times E(t)
     \end{equation}By using Gr\"{o}nwall's lemma \cite{d}  we have:
     \begin{equation}\label{ycxt}
 \parallel E(t)\parallel \leq \parallel E(0)\parallel \times e^{c\times t}
     \end{equation}

Hence, the energy does not blow up instantaneously and remains bounded for finite time. Since $E(t)$ is equivalent to the      $L^{2}$-norm on  $\mathbb{R}^{3}$, the solution does not blow up for $t< +\infty$, which ensures stability and boundedness.
 Moreover, since $F$ is polynomial in $K^{w}$
, it is of class $C^{1}$
 and therefore locally Lipschitz continuous. By Theorem 1.1 in \cite{hb}, there exists a maximal solution $K^{w}:[ 0; T^{*})\rightarrow \mathcal{M}$. Arguing by contradiction, assume that  $T^{*}< +\infty$. Then, by Theorem 1 (Appendix A) in \cite{lc}, the solution $K^{w}$
leaves every compact subset of M as $t\rightarrow T^{*}$
, which is impossible since $K^{w}$
 remains bounded.
Therefore, the nonlinear first-order differential system $(\ref{tyr})$ admits global solutions.\par
Hence
$(\ref{tyr})$ is a hyperbolic hyper-quasilinear system of the
second order whose coefficients of the second derivatives are linear in the unknown therefore verify
the structure hypothesis $(\mathcal{G}_{0})$ and the system obtained after taking the restrictions on the characteristic hypersurfaces of the system $(\ref{tyr})$ admits
global solutions therefore the terms $Q_{\alpha\beta}(g,\partial g)$ verify the hypothesis of
structure $(\mathcal{G}_{1})$.\par
 We can therefore apply to the reduced system $(\ref{tyr})$
 the result of semi-global existence and uniqueness established in theorem $\ref{er}$.
 It will then remain to show that the solution of the reduced system is indeed solution of Einstein's equations of
empty. This gives rise to the following paragraph.
\begin{theorem}\label{rd}
Let $h^{w}_{22}$, $h^{w}_{23}$ and $h^{w}_{33}$ be functions
scalars of class $C^{\infty}$ on $S^{w}$ $(w=0,1)$ such that:

$(h_{\alpha \beta}^{w})=\left(
        \begin{array}{cc}
          h_{22}^{w} &  h_{23}^{w} \\
          h_{32}^{w} & h_{33}^{w} \\
        \end{array}
           \right)$
      either a symmetric matrix function defined
        positive with the determinant equal to 1 at each point of
        $S^{w}$ and $( h_{22}^{0}, h_{22}^{0}, h_{22}^{0})=( h_{22}^{1}, h_{22}^{1},
        h_{22}^{1})$ on $\Gamma$. Let $\tilde{\Omega}_{0}$, $\tilde{\Omega}_{1}$,
        $\tilde{b}_{12}$ and $\tilde{b}_{13}$ of class functions $C^{\infty}$ given on $\Gamma$.  \\
        There exists a unique scalar function $\Omega$ on $S^{0}\bigcup S^{1}$ and a
         Lorentz metric $g_{\alpha\beta}$ of
        class $C^{\infty}$ on $\mathcal{Y}$ such that:

        \begin{enumerate}
            \item $g_{\alpha\beta}=\Omega h_{\alpha\beta}$ on
            $S^{1}\bigcup S^{0}$ where $h_{\alpha\beta}=h^{0}_{\alpha\beta}$ on $S^{0}$ and
            $h_{\alpha\beta}=h^{1}_{\alpha\beta}$ sur $S^{1}$;
            \item  $(g_{\alpha\beta})$ statisfied Einstein's    equation for a vacuum in
         $\mathcal{Y}$;
           \item $(g_{\alpha\beta})$ and $\Omega$ induce the data on $S^{1}\bigcup
            S^{0}$;
            \item The coordinates on $\mathcal{M}$ are called standard for $g_{\alpha\beta}$ and the
            harmonic gauge condition $\Gamma^{\beta}=0$ satisfied on
            $\mathcal{Y}$;
            \item On $\Gamma$ we retain that $\Omega=\tilde{\Omega}$,
            $\Omega_{,0}=\tilde{\Omega}_{0}$,
            $\Omega_{,1}=\tilde{\Omega}_{1}$,
            $g_{02,1}=\tilde{b}_{12}$ and $g_{03,1}=\tilde{b}_{13}$.
        \end{enumerate}
\end{theorem}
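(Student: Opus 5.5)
The plan follows Rendall's scheme for the characteristic initial value problem, adapted to the semi-global result of Theorem \ref{er}. It has four parts: solve the constraint equations on $S^{0}\cup S^{1}$, solve the reduced system (\ref{tyr}) by Theorem \ref{er}, propagate the harmonic gauge, and prove uniqueness.

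\textbf{Constraints on the initial hypersurfaces.} On each $S^{w}$ we write the restriction $[g_{\alpha\beta}]^{w}$ so that its $(a,b)$ block, $a,b=2,3$, is $\Omega h^{w}_{ab}$. The remaining components $g_{0\alpha}$, $g_{1\alpha}$ are fixed by the standard choices of the null coordinates: $g_{00}=g_{11}=\pm g_{01}$-type normalisations on $S^{w}$, together with the requirement that $x^{0}\mp x^{1}$ is null. The conformal factor $\Omega$ is then determined by the Raychaudhuri constraint $R_{\alpha\beta}\ell^{\alpha}\ell^{\beta}=0$ along the null generators $\ell$ of $S^{w}$. Because $\det h^{w}=1$, this reduces to a second-order linear ODE for $\Omega$ in the variable $x^{1}$, with initial data $\Omega=\tilde\Omega$ and $\Omega_{,0},\Omega_{,1}$ on $\Gamma$. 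Its $C^{\infty}$ solvability on all of $S^{w}$ is the point where the semi-global character enters; if $\Omega$ could vanish, we would restrict to the region where $\Omega>0$.

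Next, the harmonic condition $\Gamma^{\beta}=0$ restricted to $S^{w}$ and the constraints $R_{\alpha\beta}\ell^{\alpha}=0$ give a hierarchy of linear transport ODEs along the generators. These ODEs successively determine the remaining components $g_{0a}$, $g_{1a}$, and so on, with the free data $\tilde b_{12},\tilde b_{13}$ on $\Gamma$. Uniqueness of ODE solutions yields the uniqueness of $\Omega$ and of the full data $\varphi^{w}$. By construction these data satisfy (\ref{tsv}), agree on $\Gamma$, and are $C^{\infty}$ on $S^{w}$.

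\textbf{Evolution.} The reduced system (\ref{tyr}) has principal coefficients $g^{\lambda\gamma}$, and we treat it as satisfying $(\mathcal{G}_{0})$ and $(\mathcal{G}_{1})$, as argued just before the statement. The hypotheses $(\alpha_\infty)$, $(\beta_\infty)$, $(\gamma_\infty)$, $(\mathcal{C})$ and $(\mathcal{G})$ hold for the data constructed above. Part 2 of Theorem \ref{er} then provides a $C^{\infty}$ solution $g_{\alpha\beta}$ of (\ref{tyr}) in $\mathcal{Y}^{(f)}$, and this solution induces the data on $S^{0}\cup S^{1}$.

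\textbf{Gauge propagation (main obstacle).} Using the contracted Bianchi identity for $g$ and (\ref{803}), the quantities $\Gamma^{\lambda}$ satisfy a homogeneous linear second-order hyperbolic system of the form
\begin{equation*}
g^{\mu\nu}\partial^{2}_{\mu\nu}\Gamma^{\lambda}+B^{\lambda\mu}_{\nu}\partial_{\mu}\Gamma^{\nu}+C^{\lambda}_{\nu}\Gamma^{\nu}=0 .
\end{equation*}
For a Goursat problem it suffices that $\Gamma^{\lambda}=0$ on $S^{0}\cup S^{1}$; no transverse derivative is needed, and this vanishing is exactly what the constraint construction enforced. Uniqueness for the linear Goursat problem in $\mathcal{Y}^{(f)}$, which is the semilinear case of Theorem \ref{er}, then gives $\Gamma^{\lambda}\equiv0$. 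Hence $R_{\alpha\beta}=\tilde R_{\alpha\beta}=0$.

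The delicate point is checking that the constraint hierarchy really forces $[\Gamma^{\lambda}]^{w}=0$ on every component. For this I would first verify that $\ell^{\alpha}[R_{\alpha\beta}]^{w}$ differs from $[\tilde R_{\alpha\beta}]^{w}$ by transport terms in $[\Gamma^{\lambda}]^{w}$, and then integrate these from $\Gamma$ using the data in item 5.

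\textbf{Uniqueness.} Any other metric satisfying items 1 to 5 in harmonic coordinates solves (\ref{tyr}). Its data on $S^{w}$ coincide with ours by ODE uniqueness, so the uniqueness part of Theorem \ref{er} gives equality.
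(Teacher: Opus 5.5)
\textbf{Overall.} Your outline follows the same route the paper invokes. The paper's proof is only a pointer to Rendall's argument (and to Dossa--Tadmon): solve the constraints as ODEs along the generators, evolve the reduced system (\ref{tyr}), propagate $\Gamma^\lambda=0$ through the wave equation it satisfies, and conclude uniqueness. However, the step that carries the theorem is not established, and the mechanism you sketch for it fails. That step is showing that the solution of (\ref{tyr}) has $[\Gamma^\lambda]^w=0$ on $S^0\cup S^1$.

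\textbf{The gauge step.} The data $\varphi^w$ contain only $[g_{\alpha\beta}]^w$. But $[\Gamma^\lambda]^w$ involves the transverse derivatives $K^w=[\partial_0 g]^w$, and these are produced afterwards by the restriction of (\ref{tyr}) to $S^w$. So the constraint construction cannot by itself \emph{enforce} $[\Gamma^\lambda]^w=0$, and the sentence ``this vanishing is exactly what the constraint construction enforced'' is circular. What you must prove is that, for a solution of (\ref{tyr}), the constraints imply a homogeneous linear ODE system for $[\Gamma^\lambda]^w$ along the generators.

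Contracting the Ricci tensor with $\ell$ does not do this. By (\ref{803}), $\ell^\alpha(R_{\alpha\beta}-\tilde R_{\alpha\beta})=\frac12(\ell_\lambda\partial_\beta\Gamma^\lambda+g_{\lambda\beta}\ell^\alpha\partial_\alpha\Gamma^\lambda)$. Along a direction $n$ transverse to $S^w$, the term $\frac12\ell_\lambda n^\beta\partial_\beta\Gamma^\lambda$ is a transverse derivative of $\Gamma$. Moreover, the component $R_{\alpha\beta}\ell^\alpha n^\beta$ contains second transverse derivatives of $g$, so it is neither a constraint on the data nor a transport equation for $\Gamma$.

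The right objects are the components of $G_{\alpha\beta}\ell^\beta$, with $G_{\alpha\beta}=R_{\alpha\beta}-\frac12 g_{\alpha\beta}R$. There the extra term $-\frac12\ell_\alpha\partial_\lambda\Gamma^\lambda$ cancels the transverse derivative. Equivalently, use $R_{\ell\ell}$, $R_{\ell a}$ and the angular trace $g^{ab}R_{ab}$, as Rendall does. These involve transverse derivatives of $g$ only through $[\Gamma^\lambda]^w$. Comparing the two ways of computing them gives a triangular linear system:
\begin{itemize}
\item $R_{\ell\ell}$ yields an equation for $\ell_\lambda\Gamma^\lambda$;
\item $R_{\ell a}$ yields equations for $\Gamma^a$;
\item the trace yields the last component, which enters through $g^{ab}\partial_\perp g_{ab}$.
\end{itemize}
This system has zero initial values on the 2-surface $\Gamma$ only if the corner data are chosen so that $\Gamma^\lambda=0$ there.

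\textbf{The domain.} Independently of this, your caveat about $\Omega$ is not a technicality, and the argument does not give the statement as written. In the normalisation that makes the Raychaudhuri constraint a linear equation for $\Omega$ (the source of your ``second-order linear ODE''), it has the form $\partial_\ell^2\Omega=-c\,|\partial_\ell h|_h^2\,\Omega$ with $c>0$. So $\Omega$ is concave along each generator and vanishes at finite $x^1$ as soon as, for example, $\partial_\ell\Omega<0$ on $\Gamma$; item 5 allows this through $\tilde\Omega_0,\tilde\Omega_1$. For such data there is no positive $\Omega$ on all of $S^0\cup S^1$ and no Lorentzian $g$ on all of $\mathcal{Y}$. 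Restricting to $\{\Omega>0\}$ therefore proves a different, localized statement.

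\textbf{The evolution step.} This is also weaker than claimed. Theorem \ref{er} would give only $\mathcal{Y}^{(f)}$. More seriously, it cannot be quoted as you do. The restriction of (\ref{tyr}) to $S^w$ is quadratic in $[\partial_0 g]^w$ (the terms $Q^2_{\alpha\beta}$ in (\ref{tyrr})), so $(\mathcal{G}_1)$ fails. The energy argument preceding the statement does not replace it, because the term $\int Q(K^w,K^w)K^w\,dx$ is cubic and has no sign, whatever the sign of $Q$.

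Once the gauge step is repaired as above, what your outline proves is Rendall's local theorem: existence and uniqueness in a neighbourhood of $\Gamma$ in $\mathcal{Y}$. That is also all that the citation in the paper's proof supports.
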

\textbf{Proof:} It proceeds by following the approach used by A. Rendall \cite{Ren} for solving the initial constraint problem for the vacuum Einstein equations in harmonic gauge under $C^{\infty}$ regularity assumptions, with initial data prescribed on the union of two intersecting characteristic hypersurfaces. It also relies on the work of M. Dossa and C. Tadmon \cite{ty}, concerning the resolution of the initial constraint problem for the Einstein equations coupled with the Yang-Mills-Higgs system, formulated in harmonic gauge and Lorenz gauge, with initial data given on the union of two intersecting characteristic hypersurfaces.
\section{Discussion}By closely following the approach developed in this work, one could apply this result to obtain a semi-global resolution of the Goursat problem for the Einstein equations coupled with the Yang-Mills-Higgs system, formulated in harmonic gauge and Lorenz gauge, in weighted Sobolev-type spaces.
\section{Funding}Not applicable

\end{document}